\newtheorem{theorem}{Theorem}[section]
\newtheorem{lemma}[theorem]{Lemma}
\newtheorem{proposition}[theorem]{Proposition}
\newtheorem{definition}[theorem]{Definition}
\numberwithin{equation}{section}
 \author[Nupur Patanker]{Nupur Patanker}
 \address{ Indian Institute of Science Education and Research, Bhopal}
\email{nupurp@iiserb.ac.in}
\author[Sanjay Kumar Singh]{Sanjay Kumar Singh}
\address{ Indian Institute of Science Education and Research, Bhopal}
\email{sanjayks@iiserb.ac.in}
\keywords{Elliptic function field, Hyperelliptic function field, Goppa codes, Self-dual codes}
\subjclass[2010]{11T71, 14H05}
\title{A short note on self-duality of Goppa codes on Elliptic and Hyperelliptic Function Fields}
\date{}
\begin{document}
\begin{abstract}
In this note, we investigate Goppa codes which are constructed by means of Elliptic function field and Hyperelliptic function field. We also give a simple criterion for self-duality of these codes.
\end{abstract}

\maketitle
\section{Introduction}
A linear code is a subspace of the $n$-dimensional standard vector space $\mathbb{F}_{q}^{n}$ over a finite field $\mathbb{F}_{q}$. Such codes are used for transmission of information. A linear code $C$ is called self-dual if $C=C^{\perp}$, where $C^{\perp}$ is the dual of $C$ with repect to Euclidean scalar product on $\mathbb{F}_{q}^{n}$. Self-dual codes are an important class of linear codes. \par
It was observed by Goppa in 1975 that we can use algebraic function fields over $\mathbb{F}_{q}$ to construct a class of linear codes by choosing a divisor and some rational places of algebraic function field over $\mathbb{F}_{q}$. In this note, we investigate codes which are constructed by means of elliptic and hyperelliptic function fields. This class of codes provide non-trivial examples of geometric Goppa codes. \par
 For self-dual geometric Goppa codes, Driencourt \cite{self2} and Stichtenoth \cite{self3} showed a criterion, which is too complex to apply. In \cite{self-dual}, Xing gave a simple criterion for self-duality of Goppa codes over elliptic function field with base field of characteristic 2. There are some difficulties in generalising the  Xing's criterion  to the field of characteristic not equal to $2$.  Using Xing's idea and results from \cite{equal}, we give a simple criterion for self-duality of Goppa codes over elliptic function field and hyperelliptic function field of characteristic not equal to $2$.  
\section{Preliminaries}
\subsection{Goppa code}
Goppa's construction  is described as follows:\\
Let $F / \mathbb{F}_{q}$ be an algebraic function field of genus $g$. Let $P_{1},\cdots,P_{n}$ be pairwise distinct places of $F/ \mathbb{F}_{q}$ of degree 1. Let $D=P_{1}+ \cdots +P_{n}$ and $G$ be a divisor of $F / \mathbb{F}_{q}$ such that $supp(G) \cap supp(D)=\emptyset$. The geometric Goppa code $C_{\mathcal{L}}(D,G)$ associated with $D$ and $G$ is defined by
$$C_{\mathcal{L}}(D,G):=\{(x(P_{1}), \cdots ,x(P_{n}))|~ x \in \mathcal{L}(G)\} \subseteq \mathbb{F}_{q}^{n}.$$\\
Then, $C_{\mathcal{L}}(D,G)$ is an $[n,k,d]$ code with parameters $k=dim(G)-dim(G-D)$ and $d \geq n-deg(G)$.\par
Another code can be associated with the divisors $G$ and $D$ by using local components of Weil differentials. We define the code $C_{\Omega}(D,G) \subseteq \mathbb{F}_{q}^{n}$ by
$$C_{\Omega}(D,G):=\{(\omega_{P_{1}}(1),\cdots,\omega_{P_{n}}(1))|~\omega  \in \Omega_{F}(G-D)\}.$$
Then, $C_{\Omega}(D,G)$ is an $[n,k',d']$ code with parameters $k'=i(G-D)-i(G)$ and $d' \geq deg(G)-(2g-2).$\par
The dual code of $C_{\mathcal{L}}(D,G)$ is $C_{\Omega}(D,G)$ i.e. $C_{\Omega}(D,G)=C_{\mathcal{L}}(D,G)^{\perp}.$ Let $\eta$ be a Weil differential such that $\nu_{P_{i}}(\eta)=-1$ and $\eta_{P_{i}}(1)=1$ for $i=1,\cdots,n$, then $C_{\mathcal{L}}(D,G)^{\perp}=C_{\Omega}(D,G)=C_{\mathcal{L}}(D,D-G+(\eta))$.

\subsection{Elliptic Function Field}
\begin{definition}
An algebraic function field $F/K$ (where $K$ is the full constant field of $F$) is said to be an elliptic function field if the following conditions hold:
\begin{itemize}
\item the genus of $F/K$ is $g=1$, and
\item there exists a divisor $A \in D_{F}$ with $deg~A=1$.
\end{itemize}
\end{definition}
The following theorems characterize elliptic function field over $K$ (where $char ~K \neq 2$).
\begin{theorem} [\cite{book}, Chapter VI]
Let $F/K$ be an elliptic function field. If $char~K \neq 2$, there exist $x,y \in F$ such that $F=K(x,y)$ and
$$y^{2}=f(x) \in K[x]$$ with a square-free polynomial $f(x) \in K[x]$ of degree 3.\\
\end{theorem}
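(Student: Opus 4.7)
The plan is to imitate the classical derivation of a Weierstrass equation through repeated applications of Riemann--Roch, and then use the characteristic hypothesis to complete the square.

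First, I would promote the abstract divisor $A$ of degree $1$ to a rational place. By Riemann--Roch, $\ell(A) \ge \deg A + 1 - g = 1$, so there is a nonzero $z \in \mathcal{L}(A)$; then $P := A + (z)$ is an effective divisor of degree $1$, i.e., a place of $F/K$ of degree one. Since $\deg(nP) = n > 2g - 2 = 0$ for every $n \ge 1$, Riemann--Roch further yields $\ell(nP) = n$. I would then pick $x \in \mathcal{L}(2P)\setminus \mathcal{L}(P)$ and $y \in \mathcal{L}(3P)\setminus \mathcal{L}(2P)$; these have pole orders at $P$ exactly $2$ and $3$ respectively.

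Next, the seven functions $1,\ x,\ y,\ x^2,\ xy,\ x^3,\ y^2$ all lie in $\mathcal{L}(6P)$, which has dimension $6$, so they admit a nontrivial linear dependence. Among them, $x^3$ and $y^2$ are the only ones with a pole of order exactly $6$ at $P$; hence the dependence must involve both with nonzero coefficient. After rescaling it reads
\[
y^2 + a_1 xy + a_3 y \;=\; x^3 + a_2 x^2 + a_4 x + a_6.
\]
Because $\operatorname{char} K \neq 2$, the substitution $y \mapsto y - \tfrac{1}{2}(a_1 x + a_3)$ removes the terms linear in $y$, producing an equation $y^2 = f(x)$ with $f \in K[x]$.

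It remains to show that $F = K(x,y)$ and that $f$ is square-free of degree $3$. The relation shows $[F : K(x)] \le 2$, while $F \neq K(x)$ since $g(K(x)) = 0 \neq 1 = g(F)$; hence $[F : K(x)] = 2$ and $F = K(x,y)$. Comparing pole orders at $P$ of both sides of $y^2 = f(x)$ and using $v_P(x) = -2$, $v_P(y) = -3$ pins down $2\deg f = 6$, so $\deg f = 3$. If $f$ were not square-free, say $(x-\alpha)^2 \mid f$, then replacing $y$ by $y/(x-\alpha)$ would yield a defining equation of strictly lower degree for the same field, producing a rational function field and contradicting $g(F) = 1$. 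The most delicate point in the whole argument is this last step: one must justify that the reduction genuinely collapses the genus rather than merely giving a different birational model, and this is where I expect the real care to be required.
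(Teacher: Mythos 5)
The paper does not prove this statement---it is quoted from Stichtenoth, Chapter VI---and your argument is precisely the standard Riemann--Roch derivation given there: produce a rational place $P$ from the degree-one divisor $A$, take $x$ and $y$ with pole orders $2$ and $3$ at $P$, find the cubic relation inside the six-dimensional space $\mathcal{L}(6P)$, and complete the square using $\operatorname{char} K \neq 2$. Almost all of it is correct, including the step you worry about most: the square-free reduction is not delicate at the function-field level, because a repeated factor of a cubic is necessarily $(x-\alpha)^2$ with $\alpha \in K$, and then $w = y/(x-\alpha)$ satisfies $w^2 = ax+b$, whence $x, y \in K(w)$ and $F = K(w)$ is rational \emph{as a field} (no birational-model issue arises), contradicting $g = 1$.

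The one genuine gap is the step establishing $F = K(x,y)$. The cubic relation bounds $[K(x,y):K(x)] \le 2$, not $[F:K(x)] \le 2$ as you wrote, and knowing $F \neq K(x)$ does not by itself exclude the possibility $K(x) = K(x,y) \subsetneq F$; as written the inference is circular. The correct patch is standard: $[F:K(x)] = \deg (x)_\infty = 2$ because the pole divisor of $x$ is $2P$, and $P$ is then totally ramified over the pole of $x$ in $K(x)$, so every element of $K(x)$ has even valuation at $P$; since $v_P(y) = -3$ is odd, $y \notin K(x)$, hence $K(x,y) = F$. With that substitution your proof is complete and matches the cited source.
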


\begin{theorem} [\cite{book}, Chapter VI] Suppose that $F=K(x,y)$ with 
$$y^{2}=f(x) \in K[x]$$
where $f(x)$ is a square-free polynomial of degree 3. Consider the decomposition $f(x)=c\prod_{i=1}^{r} p_{i}(x)$ of $f(x)$ into monic irreducible polynomials $p_{i}(x) \in K[x]$ with $0 \neq c \in K$. Denote by $P_{i} \in \mathbb{P}_{K(x)}$ the place of $K(x)$ corresponding to $p_{i}(x)$, and by $P_{\infty} \in \mathbb{P}_{K(x)}$ the pole of $x$. Then the following holds:
\begin{enumerate}
\item $K$ is the full constant field of $F$, and $F/K$ is an elliptic function field.
\item The extension $F/K(x)$ is cyclic of degree 2. The places $P_{1},\cdots,P_{r}$ and $P_{\infty}$ are ramified in $F/K(x)$; each of them has exactly one extension in $F$, say $Q_{1}, \cdots, Q_{r}$ and $Q_{\infty}$, and we have $e(Q_{j}|P_{j})=e(Q_{\infty}|P_{\infty})=2$, $deg~Q_{j}=deg~P_{j}$ and $deg ~Q_{\infty}=1$.
\item $P_{1},\cdots,P_{r}$ and $P_{\infty}$ are the only places of $K(x)$ which are ramified in $F/K(x)$, and the different of $F/K(x)$ is 
$$ Diff(F/K(x))=Q_{1}+\cdots+Q_{r}+Q_{\infty}.$$
\end{enumerate}
\end{theorem}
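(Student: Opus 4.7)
The plan is to treat $F/K(x)$ as a Kummer extension of degree $2$ (available since $\mathrm{char}\,K\neq 2$) and reduce everything to the parities of $\nu_P(f(x))$ at the places $P$ of $K(x)$; the genus assertion of (1) then falls out of the Hurwitz formula once the different is known.

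For part (2) I would argue as follows. Any separable degree-$2$ extension is automatically cyclic. A place $P$ of $K(x)$ ramifies in $F=K(x)(\sqrt{f(x)})$ precisely when $\nu_P(f(x))$ is odd, and in that case it is totally ramified with a unique extension of ramification index $2$. Squarefreeness of $f$ gives $\nu_{P_i}(f(x))=1$ at each finite $P_i$; hence $P_i$ has a unique extension $Q_i$ with $e(Q_i|P_i)=2$. The fundamental equality $\sum e_j f_j = [F:K(x)] = 2$ then forces $f(Q_i|P_i)=1$, so $\deg Q_i=\deg P_i$. At the pole $P_\infty$, $\nu_{P_\infty}(f(x))=-\deg f=-3$ is odd, so $P_\infty$ also has a unique extension $Q_\infty$ with $e=2$ and $\deg Q_\infty=1$.

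For (3), every other place $P$ satisfies $\nu_P(f(x))=0$: $f(x)$ is a unit of even valuation there, and adjoining its square root gives an unramified local extension. Because $2$ is coprime to $\mathrm{char}\,K$, the ramification at $Q_1,\dots,Q_r,Q_\infty$ is tame, so Dedekind's different theorem yields the different exponent $e-1=1$ at each ramified place, whence
$$\mathrm{Diff}(F/K(x))=Q_1+\cdots+Q_r+Q_\infty.$$

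For (1), $\deg\mathrm{Diff}(F/K(x))=\sum_i\deg Q_i+\deg Q_\infty=\sum_i\deg p_i(x)+1=\deg f+1=4$, and Hurwitz then gives $2g_F-2=2(2g_{K(x)}-2)+4=0$, so $g_F=1$. If the full constant field $K'$ of $F$ satisfied $[K':K]=2$, then $F=K'\cdot K(x)$ would be a pure constant field extension of $K(x)$ and hence unramified everywhere, contradicting the ramification of $P_\infty$; therefore $K'=K$. Since $Q_\infty$ has degree $1$, the two defining conditions of an elliptic function field are met. The main obstacle I anticipate is the Kummer-style local analysis at the $P_i$ and the clean identification of $e(Q_i|P_i)=2$ together with trivial residue extension; once that is in place, the different computation and the Hurwitz step are routine bookkeeping.
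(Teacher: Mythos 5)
The paper does not prove this statement at all --- it is quoted verbatim from the cited reference (Stichtenoth, \emph{Algebraic Function Fields and Codes}, Chapter VI) as a known characterization of elliptic function fields. Your argument is correct and is essentially the proof given in that reference: the Kummer-extension criterion that $P$ ramifies iff $\nu_P(f)$ is odd, tameness plus Dedekind's different theorem to get exponent $e-1=1$, the observation that a proper constant field extension would be unramified (ruling out $[K':K]=2$), and the Hurwitz formula to conclude $g=1$, with $Q_\infty$ supplying the required degree-one divisor.
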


\subsection{Hyperelliptic function field}
\begin{definition}
A hyperelliptic function field over $K$ is an algebraic function field $F/K$ of genus $g \geq 2$ which contains a rational subfield $K(x) \subseteq F$ with $[F:K(x)]=2$.\\
\end{definition}

\begin{lemma} [\cite{book}, Chapter VI]
Assume that $char~K \neq 2$.
 \begin{enumerate}
\item Let $F/K$ be a hyperelliptic function field of genus $g$. Then there exist $x,y \in F$ such that $F=K(x,y)$ and 
\begin{equation}
y^{2}=f(x) \in K[x]
\end{equation}
with a square-free polynomial $f(x)$ of degree $2g+1$ or $2g+2$.
\item Conversely, if $F=K(x,y)$ and $y^{2}=f(x) \in K[x]$ with a square-free polynomial $f(x)$ of degree $m>4$, then $F/K$ is hyperelliptic of genus 
\begin{center}
$g=\left \{
\begin{array}{cc}
(m-1)/2 & \text{ if } m \equiv 1 ~~mod~~2,\\
(m-2)/2 & \text{ if } m \equiv 0 ~~mod~~2.\\
\end{array}
\right.$
\end{center}
\item Let $F=K(x,y)$ with $y^{2}=f(x)$ as in $(2.1)$. Then the place $P \in \mathbb{P}_{K(x)}$ which ramify in $F/K(x)$ are the following:

all zeros of $f(x)$ if $deg~f(x) \equiv 0 ~~mod~~2$,\\
all zeros of $f(x)$ and the pole of $x$ if $deg~f(x) \equiv 1 ~~mod~~2$.\\

\end{enumerate}
\end{lemma}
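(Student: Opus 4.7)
All three parts of the lemma revolve around the degree-$2$ extension $F/K(x)$, which is automatically separable and cyclic under the standing assumption $\mathrm{char}\, K \neq 2$. The plan is first to reduce the defining equation of $F$ over $K(x)$ to the Kummer form $y^2 = f(x)$ with $f$ square-free, then to use local valuation arguments to pin down the ramified places as in (3), and finally to apply Riemann--Hurwitz to extract the degree bound in (1) and the genus formula in (2).

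For part (1), I would start with any $z\in F\setminus K(x)$ and complete the square in the quadratic minimal polynomial $z^2 + a z + b = 0$ (valid since $2 \neq 0$ in $K$) to produce $y_0 \in F$ with $y_0^2 \in K(x)$. Writing $y_0^2 = u(x)/v(x)$ with coprime $u,v \in K[x]$, replacing $y_0$ by $y_0 v(x)$, and dividing out any square factor yields $y \in F$ with $y^2 = f(x)$ for a square-free polynomial $f \in K[x]$. For part (3), I would fix a place $P$ of $K(x)$ with extension $Q$ to $F$ and examine the identity $2\nu_Q(y) = e(Q\mid P)\,\nu_P(f)$: at a zero of $f$, $\nu_P(f)$ is odd by square-freeness and forces $e=2$; at $P_\infty$, the parity of $\nu_{P_\infty}(f) = -\deg f$ gives ramification precisely when $\deg f$ is odd; at any other place $\nu_P(f)=0$ and a routine Kummer-theoretic argument shows $P$ is either split or inert, never ramified.

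For the genus statements in (1) and (2), I would substitute the ramification data from (3) into Riemann--Hurwitz. Since $\mathrm{char}\, K \neq 2$ all ramification is tame, so each ramified finite place $P$ of $K(x)$ contributes $\deg P$ to $\deg\mathrm{Diff}(F/K(x))$; summing over the monic irreducible factors of $f$ gives $\deg f = m$, and $P_\infty$ adds one more when $m$ is odd. Feeding this into $2g-2 = 2\cdot(-2) + \deg\mathrm{Diff}(F/K(x))$ yields $g = (m-2)/2$ when $m$ is even and $g = (m-1)/2$ when $m$ is odd, which is both the formula of (2) and, after inversion, the dichotomy $m \in \{2g+1,\,2g+2\}$ of (1).

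The step I expect to be most delicate is the clean treatment of the infinite place $P_\infty$. The most transparent route is to introduce the local parameter $t = 1/x$ at $P_\infty$ and rewrite $y^2 = f(x)$ in the form $(y\, t^{\lceil m/2\rceil})^2 = t^{2\lceil m/2\rceil - m}\,\widetilde{f}(t)$ with $\widetilde{f}(0)\neq 0$, so that splitting or ramification at $t=0$ can be read off directly from the parity of $m$. This same local analysis is what ensures that the hypothesis $m>4$ in (2) actually delivers $g\geq 2$, placing $F/K$ in the hyperelliptic, rather than elliptic or rational, regime.
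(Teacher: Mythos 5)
The paper does not prove this lemma at all --- it is quoted verbatim from Stichtenoth's book (Chapter VI, essentially Proposition VI.2.3 there), so there is no in-paper argument to compare against. Your proposal reconstructs the standard proof from that reference and is essentially correct: completing the square to reach Kummer form $y^2=f(x)$ with $f$ square-free, reading off $e(Q\mid P)=2/\gcd(2,\nu_P(f))$ to get part (3), and feeding the tame different of degree $\deg f$ (plus $1$ at $P_\infty$ when $\deg f$ is odd) into Riemann--Hurwitz to get the genus formulas. The local analysis at $P_\infty$ via $t=1/x$ and the parity of $2\lceil m/2\rceil-m$ is exactly the right way to handle the infinite place.

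One point you should make explicit in part (2): before applying Riemann--Hurwitz over $K$ you must know that $K$ is the \emph{full} constant field of $F$, since the genus and the statement ``$F/K$ is hyperelliptic'' are relative to the exact constant field. This follows from your own ramification computation --- $f$ square-free and non-constant has a zero $P$ with $\nu_P(f)=1$, so some place of $K(x)$ ramifies in $F$; if the constant field were a proper quadratic extension $K'\supsetneq K$, then $F=K'(x)$ would be an (unramified) constant field extension of $K(x)$, a contradiction --- but the inference deserves a sentence rather than being left implicit. With that addition, and noting that in part (1) the existence of the index-$2$ rational subfield $K(x)$ is supplied by the paper's definition of hyperelliptic rather than something you need to construct, the argument is complete.
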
 

\section{\textbf{Self-duality of Geometric Goppa codes over Elliptic Function Field $F/K$ with char $K \neq 2$}}
In \cite{self-dual}, Xing gave a criterion for self-duality of Goppa codes over elliptic function field with base field of characteristic $2.$ To get the criteria he has used the following proposition:
\begin{proposition}
Let $D=P_{1}+P_{2}+\cdots+P_{n}$, $G$ and $H$ be two positive divisors such that
\begin{enumerate}
\item $supp(D) \cap supp(G)=\emptyset$, $supp(D) \cap supp(H)=\emptyset$ and $supp(G) \cap supp(H)\neq \emptyset$.
\item $G^{\sigma}=G$ and $H^{\sigma}=H$ for any $\sigma \in Gal(\overline{\mathbb{F}_{q}}/\mathbb{F}_{q})$.
\item $deg(H)=2~deg(G)$.
\end{enumerate}
Then $C_{\mathcal{L}}(D,G)=C_{\mathcal{L}}(D,H-G)$ if and only if $H=2G$.

\end{proposition}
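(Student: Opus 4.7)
\textit{Proof plan.} The ``if'' direction is immediate: when $H = 2G$, we have $H - G = G$, so $C_{\mathcal{L}}(D, G) = C_{\mathcal{L}}(D, H - G)$ by definition. For the converse, the strategy is to upgrade the assumed coincidence of codes first to an equality of Riemann-Roch spaces inside $F$, and then to convert this into the divisor equation $H = 2G$ via a degree count using Riemann-Roch on the elliptic function field.

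First I would use that, since $\deg G < n$ (the standard dimensional hypothesis for the Goppa construction) and $supp(G)$, $supp(H-G)$ are disjoint from $supp(D)$ by (1), both evaluation maps $\mathcal{L}(G) \to \mathbb{F}_{q}^{n}$ and $\mathcal{L}(H-G) \to \mathbb{F}_{q}^{n}$ are injective. Since their images coincide, each $u \in \mathcal{L}(G)$ has a unique partner $v \in \mathcal{L}(H-G)$ agreeing with it on $P_{1}, \ldots, P_{n}$. A valuation-by-valuation check at each place then shows
\[
u - v \in \mathcal{L}\bigl(\sup(G, H-G) - D\bigr),
\]
and because $\deg \sup(G, H-G) \leq \deg G + \deg(H-G) = 2 \deg G$ by condition (3), the mild bound $2\deg G < n$ forces this Riemann-Roch space to be trivial, so $u = v \in \mathcal{L}(G) \cap \mathcal{L}(H-G) = \mathcal{L}(\inf(G, H-G))$. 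The symmetric argument (starting from an arbitrary $v' \in \mathcal{L}(H-G)$) yields $\mathcal{L}(G) = \mathcal{L}(H-G) = \mathcal{L}(\inf(G, H-G))$.

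Now I would invoke Riemann-Roch on the elliptic function field: the canonical divisor has degree $2g - 2 = 0$, hence $\dim \mathcal{L}(A) = \deg A$ for any divisor $A$ with $\deg A > 0$. Equating dimensions gives $\deg \inf(G, H-G) = \deg G$, which together with $\inf(G, H-G) \leq G$ forces $\inf(G, H-G) = G$, i.e.\ $2G \leq H$. The symmetric conclusion $H \leq 2G$ gives $H = 2G$. The main obstacle, and the real heart of the argument, is the step $u = v$: turning a coincidence of evaluation vectors into a genuine identity of functions inside $F$. This is where the disjoint-support hypothesis (1) and the degree bound play their decisive role; condition (2) is used implicitly to guarantee that the divisors and their Riemann-Roch spaces are $\mathbb{F}_{q}$-rational, as demanded by the Goppa construction, while condition (3) ensures that $G$ and $H-G$ have the same degree so that the symmetric Riemann-Roch comparison is legitimate.
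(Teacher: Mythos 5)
First, a point of reference: the paper itself gives no proof of this proposition. It is quoted from Xing's article \cite{self-dual} as background, and the authors' own contribution is precisely to sidestep it in favour of the Munuera--Pellikaan criterion (Theorem 3.3). So your argument has to stand on its own, and as written it has two genuine gaps. The first is quantitative. You silently add the hypotheses $\deg G<n$ and $2\deg G<n$. Some degree restriction is unavoidable (for $\deg G \geq n+1$ on an elliptic curve both codes are all of $\mathbb{F}_{q}^{n}$ and the statement fails), but $2\deg G<n$ is not ``mild'' here: in the self-dual situation this proposition is designed for one has $\deg G=n/2$, so your key vanishing $\mathcal{L}(\sup(G,H-G)-D)=0$ sits exactly at the boundary and does not follow. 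Worse, the inequality $\deg \sup(G,H-G)\leq \deg G+\deg(H-G)$ is false in general: it is equivalent to $\inf(G,H-G)\geq 0$, which fails at any place occurring in $G$ with higher multiplicity than in $H$ (e.g.\ $G=2P$, $H=P+3Q$). Using only the positivity of $G$ and $H$, the a priori bound is $\deg \sup(G,H-G)\leq \deg G+\deg H=3\deg G$, so you would need $3\deg G<n$, or a separate argument controlling $\inf(G,H-G)$.

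The second gap is in the endgame. The step ``$\dim\mathcal{L}(A)=\deg A$, hence $\deg\inf(G,H-G)=\deg G$'' is valid only when $\deg A>0$; if $\deg\inf(G,H-G)=0$ you get $\dim\mathcal{L}(\inf(G,H-G))\leq 1$ and no conclusion. This case genuinely occurs and is in fact a counterexample to the proposition as quoted: take $G=Q$ and $H=Q+Q'$ with $Q\neq Q'$ rational points off $supp(D)$. Conditions (1)--(3) all hold, $\mathcal{L}(Q)=\mathcal{L}(Q')=\mathbb{F}_{q}$, so both $C_{\mathcal{L}}(D,G)$ and $C_{\mathcal{L}}(D,H-G)$ are the repetition code, yet $H\neq 2G$. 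So a hypothesis such as $\deg G>2g-1$ is missing from the statement; once you impose $\deg G\geq 2$, your dimension count does force $\deg\inf(G,H-G)>0$, and the rest ($\inf(G,H-G)=G$, hence $2G\leq H$, hence $H=2G$ by comparing degrees) is correct. Note also that your proof never uses the hypothesis $supp(G)\cap supp(H)\neq\emptyset$, and uses (2) only to say the divisors are $\mathbb{F}_{q}$-rational; that is a warning sign that the statement you are actually proving is not the one asserted, and a complete argument must make the degree hypotheses and the boundary case explicit.
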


This proposition doesn't apply when $G$ and $H$ are not positive divisors (For example: Let $F/K$ be an algebraic function field, if $P_{\alpha} (\neq P_{\infty})$ be a place of $K(x)$ such that its extension in $F$ has degree $1$, then for $G=-P_{\infty}$ and $H=-P_{\infty}-P_{\alpha}$ we have $C_{\mathcal{L}}(D,G)=C_{\mathcal{L}}(D,H-G)$ but $H \neq 2G$). In this case, we observed that we can use following theorem to determine the self-duality of Goppa codes over elliptic and hyperelliptic function fields.\par

\begin{definition}
We call two divisors $G$ and $H$ equivalent with respect to $D$ if there exists $u \in F$ such that $H=G+(u)$ and $u(P_{i})=1$, for all $i=1,\cdots,n.$.
\end{definition}

\begin{theorem}[\cite{equal}, Corollary $4.15$]
Suppose $n>2g+2$. Let $G$ and $H$ be divisors of same degree $m$ on a curve of genus $g$. If $C_{\mathcal{L}}(D,G)$ is not equal to $0$ nor to $\mathbb{F}_{q}^{n}$ and $2g-1<m<n-1$, then $C_{\mathcal{L}}(D,G)=C_{\mathcal{L}}(D,H)$ if and only if $G$ and $H$ are equivalent with respect to $D$.
\end{theorem}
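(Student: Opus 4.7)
I would split the equivalence into two directions, with the nontrivial content in the forward direction. For the ``if'' direction, assume $H = G + (u)$ with $u(P_i) = 1$ for all $i$. Then $f \mapsto f/u$ is an $\mathbb{F}_q$-linear isomorphism $\mathcal{L}(G) \to \mathcal{L}(H)$, and the evaluation $(f/u)(P_i) = f(P_i)/u(P_i) = f(P_i)$ is preserved coordinate-wise, so $C_{\mathcal{L}}(D, G) = C_{\mathcal{L}}(D, H)$.

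For the ``only if'' direction, the hypotheses $2g - 1 < m < n - 1$ together with Riemann--Roch give $\dim \mathcal{L}(G) = \dim \mathcal{L}(H) = m - g + 1$, and the injectivity of the evaluation maps on $D$ follows from $\mathcal{L}(G - D) = \mathcal{L}(H - D) = 0$. The assumed equality of codes therefore yields a unique $\mathbb{F}_q$-linear isomorphism $\phi : \mathcal{L}(G) \to \mathcal{L}(H)$ with $\phi(f)(P_i) = f(P_i)$ for every $f$ and every $i$. My plan is to realize $\phi$ as multiplication by a single element $1/u \in F$; once this is done, comparing $\phi$ and $\phi^{-1}$ on extremal sections pins down $(u) = H - G$, while the identity $u(P_i) = 1$ is forced by $\phi(f)(P_i) = f(P_i)$ applied to any $f$ with $f(P_i) \neq 0$.

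The natural first attempt is to pick nonzero $x, y \in \mathcal{L}(G)$ and form $z := x\,\phi(y) - y\,\phi(x) \in \mathcal{L}(G + H)$; since $\phi$ preserves evaluation, $z$ vanishes at every $P_i$, so $z \in \mathcal{L}(G + H - D)$, and if this space were zero we would deduce $\phi(x)/x = \phi(y)/y$, giving the required $u = x/\phi(x)$. The main obstacle is that $\deg(G + H - D) = 2m - n$ is not forced to be negative by $m < n - 1$, so the naive vanishing only works when $m < n/2$. To handle the full range I would exploit the remaining hypotheses --- $n > 2g + 2$ and the nondegeneracy of the code --- to constrain the alternating family $z_{ij} = x_i\,\phi(x_j) - x_j\,\phi(x_i)$ arising from a basis $x_1, \ldots, x_k$ of $\mathcal{L}(G)$. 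The Pl\"ucker-type identities satisfied by the $z_{ij}$, combined with the fact that a nondegenerate AG code in this parameter range determines its underlying projective embedding up to projective equivalence, should force all $z_{ij}$ to vanish. This geometric step --- equivalent to the statement that the code recovers $G$ up to $D$-equivalence --- is the heart of Corollary 4.15 of \cite{equal} and is the step I expect to be the most delicate to execute rigorously.
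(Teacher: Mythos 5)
The paper offers no proof of this statement at all --- it is imported verbatim from Munuera--Pellikaan \cite{equal}, Corollary 4.15 --- so the only question is whether your argument stands on its own. Your ``if'' direction is complete and correct, and your ``only if'' argument is essentially complete in the sub-case $2m<n$: there $\deg(G+H-D)<0$ forces $z=x\,\phi(y)-y\,\phi(x)=0$, base-point-freeness of $|G|$ and $|H|$ (which follows from $m\geq 2g$) gives $(u)=H-G$ from the two inequalities $(u)\leq H-G$ and $(u)\geq H-G$, and $u(P_i)=1$ follows as you say. You also correctly identify exactly where this breaks.

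But the remaining case $2m\geq n$ is a genuine gap, not a technicality, for two reasons. First, your proposed repair is circular: the principle you invoke --- that ``a nondegenerate AG code in this parameter range determines its underlying projective embedding up to projective equivalence'' --- is precisely the content of the theorem you are trying to prove, restated in the language of the morphisms $\varphi_G,\varphi_H:X\to\mathbb{P}^{k-1}$ determined by the bases $(x_i)$ and $(\phi(x_i))$; and the assertion that the Pl\"ucker relations among the $z_{ij}\in\mathcal{L}(G+H-D)$ ``should force'' their vanishing is not backed by any argument (an alternating family in a nonzero space satisfying the Pl\"ucker relations need not vanish; one must actually use $n>2g+2$, and you never do). Second, the excluded case is the only one that matters for this paper: in both applications one has $H=D+(\eta)-G$ with $\deg G=\deg H=\tfrac{n}{2}+g-1$, so $\deg(G+H-D)=2g-2\geq 0$, and your easy sub-case never applies. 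To close the argument you would need the actual mechanism of \cite{equal} (or an independent one, e.g.\ a shortening/puncturing induction that lowers $2m-n$ while preserving the hypotheses, or a surjectivity statement for $\mathcal{L}(G)\otimes\mathcal{L}(H)\to\mathcal{L}(G+H)$ in the range $m\geq 2g+1$); as written, the heart of the proof is a declared expectation rather than a proof.
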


 We start with $K=\mathbb{F}_{q}$, $q$ large enough and characteristic of $K \neq 2$. Let $F/K$ be an elliptic function field. Then, $F=K(x,y)$ with $x,y \in F$ such that 
$$y^{2}=f(x) \in K[x]$$ with a square-free polynomial $f(x) \in K[x]$ of degree 3.\\\\
Consider the decomposition $f(x)=c\prod_{i=1}^{r} p_{i}(x)$ of $f(x)$ into monic irreducible polynomials $p_{i}(x) \in K[x]$ with $0 \neq c \in K$. Denote by $P_{i} \in \mathbb{P}_{K(x)}$ the place of $K(x)$ corresponding to $p_{i}(x)$, and by $P_{\infty} \in \mathbb{P}_{K(x)}$ the pole of $x$.\\\\
 Let $n>4$ an even positive integer.
Let $R_{1},\cdots, R_{\frac{n}{2}}$ be places of degree 1 of $K(x)$ such that
\begin{itemize}
\item $R_{i} \not \in \{P_{1},\cdots, P_{r}\} ~~(1 \leq i \leq \frac{n}{2})$
\item For each $i$, $R_{i}$ has exactly  two extensions in $F$ say, $S_{i,1}$ and $S_{i,2}$.
\end{itemize}
  Let $D=\sum_{i=1}^{\frac{n}{2}} S_{i,1}+S_{i,2}$. Let $g(x) \in K[x]$ such that $(g(x))_{0}=D$ in $F$. Let $G$ be a divisor of $F$ of degree $\frac{n}{2}$. Let $\eta$ be a differential in $F$ defined by $$\eta=\frac{g'(x) dx}{g(x)}.$$ 
Then, we get $\nu_{P}(\eta)=-1$ and $res_{P}(\eta)=1$ for all $P \in supp(D)$. Therefore, $C_{\mathcal{L}}(D,G)^{\perp}=C_{\mathcal{L}}(D,D+(\eta)-G)$. \\
Now, $D+(\eta)= D+(g'(x))+(dx)-(g(x))= D + (g'(x))-4Q_{\infty}+ Q_{1}+ \cdots Q_{r}+Q_{\infty}+nQ_{\infty}-D=(g'(x))+(n-3)Q_{\infty}+ Q_{1}+ \cdots Q_{r}$. Then the condition for self-duality of $C_{\mathcal{L}}(D,G)$ is given by the following theorem.

\begin{theorem}
With all conditions as above, $C_{\mathcal{L}}(D,G)$ is self-dual if and only if $(g'(x))=2G-(u)-(n-3)Q_{\infty}-Q_{1}-\cdots -Q_{r}$ for some $u \in F$ such that $u(P)=1$ for $P \in supp(D)$.
\end{theorem}
\begin{proof}
$C_{\mathcal{L}}(D,G)$ is self-dual iff $C_{\mathcal{L}}(D,G)= C_{\mathcal{L}}(D,G)^{\perp}=C_{\mathcal{L}}(D,D+(\eta)-G)$. \\
By  \cite{equal} corollary 4.15,
\begin{align*}
 & C_{\mathcal{L}}(D,G)=C_{\mathcal{L}}(D,D+(\eta)-G)\\
 \Leftrightarrow &~G=D+(\eta)-G+(u), \text{ for some } u \in F \text{ such that } u(P)=1 \text{ for each } P \in supp(D)\\
 \Leftrightarrow &~(g'(x))=2G-(u)-(n-3)Q_{\infty}-Q_{1}-\cdots -Q_{r}
 \end{align*}
\end{proof}

\section{\textbf{Self-duality of geometric Goppa codes over Hyperelliptic function field $F/K $ of genus 2 with char $K \neq 2$}}
Let $K=\mathbb{F}_{q}$, $q$ large enough and characteristic of $K \neq 2$. Let $F/K$ be a hyperelliptic function field of genus 2. Then there exist $x,y \in F$ such that $F=K(x,y)$ and 
$$y^{2}=f(x) \in K[x]$$
with a square-free polynomial $f(x)$ of degree 5. 
Then the places $P \in \mathbb{P}_{K(x)}$ corresponding to all zeroes of $f(x)$ and the pole of $x$ ramify in $F/K(x)$.\par
Let $n>6$ be an even positive integer. Let $P_{1},\cdots, P_{r}$ be zeros of $f(x)$ and $P_{\infty}$ pole of $x$. Therefore, $P_{1}, \cdots, P_{r},P_{\infty}$ ramify in $F/K(x)$. Let $R_{1},\cdots, R_{\frac{n}{2}}$ be places of degree 1 of $K(x)$ such that
\begin{itemize}
\item $R_{i} \not \in \{P_{1},\cdots, P_{r}\}, 1 \leq i \leq \frac{n}{2}$
\item For each $i$, $R_{i}$ has exactly  two extensions in $F$ say, $S_{i,1}$ and $S_{i,2}$.
\end{itemize}
 Let $D=\sum_{i=1}^{\frac{n}{2}} S_{i,1}+S_{i,2}$. Let $g(x) \in K[x]$ such that $(g(x))_{0}=D$ in $F$. Let $G$ be a divisor of $F$ of degree $\frac{n}{2}+1$. Let $\eta$ be a differential in $F$ defined by $$\eta=\frac{g'(x) dx}{g(x)}.$$ 
Then, we get $\nu_{P}(\eta)=-1$ and $res_{P}(\eta)=1$ for all $P \in supp(D)$. Therefore, $C_{\mathcal{L}}(D,G)^{\perp}=C_{\mathcal{L}}(D,D+(\eta)-G)$. \\ 
 Now, $D+(\eta)= D+(g'(x))+(dx)-(g(x))= D + (g'(x))-4Q_{\infty}+ Q_{1}+ \cdots Q_{r}+Q_{\infty}+nQ_{\infty}-D=(g'(x))+(n-3)Q_{\infty}+ Q_{1}+ \cdots Q_{r}$. Then the condition for self-duality of $C_{\mathcal{L}}(D,G)$ is given by the following theorem.

\begin{theorem}
With all conditions as above, $C_{\mathcal{L}}(D,G)$ is self-dual if and only if $(g'(x))=2G-(u)-(n-3)Q_{\infty}-Q_{1}-\cdots -Q_{r}$ for some $u \in F$ such that $u(P)=1$ for $P \in supp(D)$.
\end{theorem}
\begin{proof}
$C_{\mathcal{L}}(D,G)$ is self-dual iff $C_{\mathcal{L}}(D,G)= C_{\mathcal{L}}(D,G)^{\perp}=C_{\mathcal{L}}(D,D+(\eta)-G)$. \\By \cite{equal} corollary 4.15,
\begin{align*}
 & C_{\mathcal{L}}(D,G)=C_{\mathcal{L}}(D,D+(\eta)-G)\\
 \Leftrightarrow &~G=D+(\eta)-G+(u) \text{ for some } u \in F \text{ such that } u(P)=1 \text{ for each } P \in supp(D)\\
 \Leftrightarrow &~(g'(x))=2G-(u)-(n-3)Q_{\infty}-Q_{1}-\cdots -Q_{r}.
 \end{align*}
\end{proof}

\section{Concluding Remarks}
In this note, we have investigated Goppa codes over Elliptic and Hyperelliptic function fields with base field of characteristic not equal to $2$. We gave a simple criterion for self-duality of these codes.

\end{document}